\newtheorem{theorem}{Theorem}
\newtheorem{claim}{Claim}
\newtheorem{lemma}[theorem]{Lemma}
\newtheorem{question}{Question}
\newtheorem*{theoremNL}{Theorem}
\DeclareMathOperator{\Aut}{Aut}
\DeclareMathOperator{\cd}{cd}
\begin{document}
\bibliographystyle{plain}


\title{\textbf{Parasurface groups}}
\author{K. Bou-Rabee}\maketitle


\begin{abstract}
A residually nilpotent group is \emph{$k$-parafree} if all of its lower central series quotients match those of a free group of rank $k$.
Magnus proved that $k$-parafree groups of rank $k$ are themselves free.
We mimic this theory with surface groups playing the role of free groups.
Our main result shows that the analog of Magnus' Theorem is false in this setting.
\end{abstract}

\section*{Introduction}

This article is motivated by three stories.
The first story concerns a theorem of Wilhelm Magnus.
Recall that the \emph{lower central series} of a group $G$ is defined to be
\[
 \gamma_1(G) := G \text{ and }\gamma_k(G) := [G, \gamma_{k-1}(G)] \text{ for $k \geq 2$},
 \]
 where $[A,B]$ denotes the group generated by commutators of elements of $A$ with elements of $B$. The \emph{rank} of $G$ is the size of a minimal generating set of $G$. In 1939, Magnus gave a beautiful characterization of free groups in terms of their lower central series \cite{M1}.

\begin{theoremNL}[Magnus' theorem on parafree groups] \label{magnusTheorem}
Let $F_k$ be a nonabelian free group of rank $k$ and $G$ a group of rank $k$.
If $G/\gamma_i(G) \cong F_k/\gamma_i(F_k)$ for all $i$, then $G \cong F_k$.
\end{theoremNL}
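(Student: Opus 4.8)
The plan is to fix a minimal generating set $x_1, \dots, x_k$ of $G$ and use it to build a surjection $\phi \colon F_k \to G$ sending the $i$-th free generator to $x_i$; the goal then reduces to showing that $\phi$ is injective, i.e.\ that $N := \ker \phi$ is trivial. Because $\phi$ is onto and the lower central series is preserved by surjections, we have $\phi(\gamma_i(F_k)) = \gamma_i(G)$ for every $i$, so $\phi$ descends to a surjection $\bar\phi_i \colon F_k/\gamma_i(F_k) \twoheadrightarrow G/\gamma_i(G)$ on each nilpotent quotient.

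The key step is to upgrade each $\bar\phi_i$ from a surjection to an isomorphism. By hypothesis $G/\gamma_i(G) \cong F_k/\gamma_i(F_k)$, so composing $\bar\phi_i$ with such an isomorphism yields a surjective endomorphism of the finitely generated nilpotent group $F_k/\gamma_i(F_k)$. Since finitely generated nilpotent groups are residually finite (Mal'cev) and finitely generated residually finite groups are Hopfian, this endomorphism is an isomorphism; hence $\bar\phi_i$ is itself injective.

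Injectivity of $\bar\phi_i$ translates directly into a statement about $N$. An element $w \in F_k$ lies in $\ker \bar\phi_i$ precisely when $\phi(w) \in \gamma_i(G) = \phi(\gamma_i(F_k))$, i.e.\ when $w \in N \cdot \gamma_i(F_k)$; thus $\ker \bar\phi_i = N\gamma_i(F_k)/\gamma_i(F_k)$. Triviality of this kernel forces $N \subseteq \gamma_i(F_k)$, and since this holds for every $i$ we conclude $N \subseteq \bigcap_i \gamma_i(F_k)$.

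Finally, I would invoke the residual nilpotence of free groups --- Magnus' own result that $\bigcap_i \gamma_i(F_k) = 1$ --- to deduce $N = 1$, so that $\phi$ is the desired isomorphism $F_k \cong G$. I expect the main obstacle to be the isomorphism-from-surjection step: everything rests on knowing that the matching nilpotent quotients are Hopfian, together with the fact that free groups are residually nilpotent, which is precisely what makes the ``for all $i$'' hypothesis strong enough to recover $G$ on the nose rather than merely its nilpotent completion.
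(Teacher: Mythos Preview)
The paper does not actually prove this theorem; it is quoted as a classical result of Magnus and cited to \cite{M1}. Your argument is correct and is the standard proof. It also mirrors exactly the technique the paper itself deploys in the proof of Theorem~\ref{typeTheorem} for Type~I groups: a surjection between groups with isomorphic nilpotent quotients must induce isomorphisms on each quotient by the Hopfian property of finitely generated nilpotent groups, and residual nilpotence of the source then forces the kernel to vanish.
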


Following this result, Hanna Neumann inquired whether it was possible for two residually nilpotent groups $G$ and $G'$ to have $G/\gamma_i(G) \cong G'/\gamma_i(G')$ for all $i$ without having $G \cong G'$ (see \cite{L1}).
Gilbert Baumslag \cite{baumslag-1976} gave a positive answer to this question by constructing what are now known as parafree groups which are not themselves free. A group $G$ is \emph{parafree} if:

\begin{enumerate}
\item $G$ is residually nilpotent, and
\item there exists a finitely generated free group $F$ with the property that $G/\gamma_i(G) \cong F/\gamma_i(F)$ for all $i$.
\end{enumerate}

By Magnus' Theorem, Baumslag's examples necessarily have rank different from the corresponding free group.
In this paper we give new examples addressing Neumann's question.
Specifically, we construct residually nilpotent groups $G$ which share the same lower central series quotients with a surface group but are not surface groups themselves.
These examples are analogous to Baumslag's parafree groups, where the role of free groups is replaced by surface groups.
Consequently, we call such groups \emph{parasurface groups}.
As the parasurface examples constructed in this paper have the same rank as their corresponding surface groups, the analog of Magnus' Theorem for parasurface groups is false.

\begin{theorem} \label{magnusparasurfacethm}
Let $\Gamma_g$ be the genus $g$ surface group.
There exists a rank $2g$ residually nilpotent group $G$ such that $G/\gamma_i(G) \cong \Gamma_g / \gamma_i(\Gamma_g)$ for all $i$.
\end{theorem}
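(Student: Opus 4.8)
The plan is to deduce the entire statement from Stallings' integral homology theorem, which is the natural engine behind every parafree-type result: if a homomorphism $\phi\colon A \to B$ induces an isomorphism on $H_1(-;\mathbb{Z})$ and an epimorphism on $H_2(-;\mathbb{Z})$, then $\phi$ induces isomorphisms $A/\gamma_i(A) \cong B/\gamma_i(B)$ for every $i$. The direction is crucial here. Mapping the unknown group \emph{onto} $\Gamma_g$ would, combined with residual nilpotence, force an honest isomorphism: any $x$ in the kernel would land in $\gamma_i$ of the target, hence in $\gamma_i(G)$ for all $i$ by injectivity of the induced $\bar\phi_i$, hence in $\bigcap_i\gamma_i(G)=1$. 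So instead I would construct $G$ together with a map $\iota\colon \Gamma_g \to G$ that is an $H_1$-isomorphism and an $H_2$-epimorphism. Since $H_2(\Gamma_g;\mathbb{Z}) \cong \mathbb{Z}$ (the fundamental class of the closed surface), I would arrange $H_2(G;\mathbb{Z})$ to be a \emph{proper} quotient of $\mathbb{Z}$, ideally trivial. This has two payoffs at once: the $H_2$-epimorphism hypothesis becomes automatic, and, because $H_2$ is an isomorphism invariant, it immediately certifies $G \not\cong \Gamma_g$, so the example really is parasurface and the analog of Magnus' Theorem fails.

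Next I would produce the group, following Baumslag. I would keep exactly the $2g$ surface generators $a_1,b_1,\dots,a_g,b_g$, so that $\iota$ fixes generators and is visibly an isomorphism onto $H_1(G)=\mathbb{Z}^{2g}$; a further benefit is that the rank of $G$ is then automatically exactly $2g$, since the presentation exhibits $2g$ generators while $H_1(G)=\mathbb{Z}^{2g}$ forces at least $2g$. All defining relations will be taken inside $[F,F]$ (so as not to disturb $H_1$), and I would replace the single surface relation by relations that still force $R=\prod_{i=1}^g[a_i,b_i]=1$ in $G$ but simultaneously render $R$ homologically trivial. The relevant bookkeeping is Hopf's formula: writing $G=F/N$ one has $H_2(G;\mathbb{Z})\cong (N\cap[F,F])/[F,N]=N/[F,N]$, the last equality because $N\subseteq[F,F]$. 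So the concrete task is to engineer a normal closure $N$ containing the surface relator with $N\neq 1$ yet $N/[F,N]=0$ — that a nontrivial relation module can have vanishing coinvariants is exactly the Baumslag–Solitar phenomenon, and this is where the homological design of the relators lives.

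With such a $G$ in hand, Stallings' theorem supplies $\Gamma_g/\gamma_i(\Gamma_g) \cong G/\gamma_i(G)$ for all $i$ essentially for free, so the only remaining — and genuinely hard — step is to prove that $G$ is residually nilpotent, i.e.\ that $\bigcap_i \gamma_i(G) = 1$. I expect this to be the main obstacle, precisely as in Baumslag's parafree theory: Stallings controls only the quotients $G/\gamma_i(G)$ and says nothing about their intersection. The plan is to attack residual nilpotence through the associated graded Lie algebra, using Labute's analysis of the lower central series of one-relator (and more generally mild) presentations: one shows the defining relator of $G$ is mild, so that the associated graded of $G$ is the expected torsion-free quotient of the free Lie algebra — the Labute Lie algebra cut out by the surface relation — from which residual nilpotence follows. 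An alternative route is to embed $G$ into a group already known to be residually (torsion-free) nilpotent via a Magnus-type embedding. Either way, it is this Lie-theoretic step, rather than the homological computation (which is routine once the relations are chosen), that carries the real weight of the theorem.
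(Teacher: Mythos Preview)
Your plan has a fatal symmetry error. You correctly observe that a surjection $G \twoheadrightarrow \Gamma_g$, combined with residual nilpotence of $G$ and matching nilpotent quotients, forces an isomorphism. But the identical argument runs the other way: your $\iota\colon \Gamma_g \to G$ is surjective (it sends the generators to the full generating set of $G$), and once Stallings matches the lower central quotients, residual nilpotence of $\Gamma_g$ itself---which is classical---gives $\ker\iota \subseteq \bigcap_i \gamma_i(\Gamma_g) = 1$, hence $G \cong \Gamma_g$. In the paper's trichotomy your construction is Type~I, and Theorem~\ref{typeTheorem} shows Type~I groups are always surface groups; the requirement that $R = \prod[a_i,b_i]$ vanish in $G$ is precisely what manufactures the fatal surjection. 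Relatedly, your homological target $H_2(G) = N/[F,N] = 0$ with $1 \neq N \subseteq [F,F]$ is unreachable in a free group: from $N = [F,N]$ and $N \subseteq \gamma_2(F)$ one gets inductively $N = [F,N] \subseteq [F,\gamma_k(F)] = \gamma_{k+1}(F)$ for every $k$, so $N \subseteq \bigcap_k \gamma_k(F) = 1$. No nontrivial normal subgroup of a free group has vanishing coinvariants.

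The paper sidesteps the trap by building \emph{no} homomorphism between $G$ and $\Gamma_g$ in either direction. It takes $G = \langle a_1,\dots,a_k \mid [a_1\delta,a_2][a_3,a_4]\cdots[a_{k-1},a_k]\rangle$ with $\delta \in [F_k,F_k]$ and uses the endomorphism $\psi\colon a_1 \mapsto a_1\delta$, $a_j \mapsto a_j$ of $F_k$. This $\psi$ is \emph{not} an automorphism of $F_k$, but on each free nilpotent quotient $F_k/\gamma_i(F_k)$ it is surjective (Lemma~\ref{generatorlemma}) and hence bijective by Hopfianity; since it carries the surface relator to the new one, it induces $\Gamma_{k/2}/\gamma_i \cong G/\gamma_i$ with no global map---a Type~III example. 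Residual nilpotence is obtained not via Labute but by recognizing $G$ as an amalgam $F_{k-2} \ast_{\mathbb{Z}} F_2$ and applying Azarov's theorem, and $G \not\cong \Gamma_{k/2}$ is certified by exhibiting a proper, non-free, rank-$k$ subgroup, which a genus-$k/2$ surface group cannot contain.
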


\vskip.1in
We now turn towards the second story, which concerns residual properties in free groups.
Our second story requires some notation.
We say two elements $x, y \in G$ are \emph{nilpotent--conjugacy--equivalent}, if the images of $g$ and $h$ in all nilpotent quotients of $G$ are conjugate.
A group $G$ is said to be \emph{conjugacy--nilpotent--separable} if any pair of nilpotent--conjugacy--equivalent elements must be conjugate.
Free groups are known to be conjugacy--nilpotent--separable (see \cite{lyndon-schupp} and \cite{PL}).
A natural question to ask is whether the role of inner automorphisms may be played by automorphisms.
In this vein, we say two elements $g, h \in G$ are \emph{automorphism--equivalent}, if there exists some $\phi \in \Aut G$ with $\phi(g) = h$.
Further, we say $g$ is \emph{nilpotent--automorphism--equivalent} to $h$ if the images of $g$ and $h$ in all nilpotent quotients of $G$ are automorphism--equivalent.
A group G is \emph{automorphism--nilpotent--separable}, if all pairs of nilpotent--automorphism--equivalent elements must be automorphism--equivalent.
In contrast to free groups being conjugacy--nilpotent--separable, we have

\begin{theorem} \label{AutFreeNilpotentSeparableTheorem}
Nonabelian free groups of even rank are not automorphism--nilpotent--separable.
\end{theorem}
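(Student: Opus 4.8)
The plan is to exhibit, inside a free group $F=F_{2g}$ of even rank, two elements that are nilpotent--automorphism--equivalent but not automorphism--equivalent; since every surface group has even rank $2g$, this will cover exactly the even--rank case. I would take as the first element the standard genus $g$ surface relator $u=R_g:=[a_1,b_1]\cdots[a_g,b_g]$. Its image in $\gamma_2(F)/\gamma_3(F)\cong\Lambda^2(\mathbb{Z}^{2g})$ is the standard symplectic form, which is nondegenerate precisely because the rank is even; this is the structural reason the statement is confined to even rank. The second element $w$ will be a ``parasurface relator'': a word that agrees with $u$ in every nilpotent quotient but for which $F/\langle\langle w\rangle\rangle$ is not the surface group $\Gamma_g=F/\langle\langle u\rangle\rangle$.

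The key reduction is the following dictionary. If some $\phi\in\Aut F$ satisfied $\phi(u)=w$, then $\phi$ would carry $\langle\langle u\rangle\rangle$ onto $\langle\langle w\rangle\rangle$ and hence descend to an isomorphism $\Gamma_g\cong F/\langle\langle w\rangle\rangle$. Thus, to prove non--automorphism--equivalence, it suffices to produce a $w$ with $F/\langle\langle w\rangle\rangle\not\cong\Gamma_g$. Conversely, automorphism--equivalence of the images of $u$ and $w$ in each nilpotent quotient $F/\gamma_i(F)$ is exactly what nilpotent--automorphism--equivalence demands.

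To build such a $w$, I would fix a proper (non--surjective) endomorphism $\alpha\colon F\to F$ inducing the identity on $H_1(F)=F/\gamma_2(F)$. Because each graded piece $\gamma_i(F)/\gamma_{i+1}(F)$ is the degree $i$ part of the free Lie algebra on $H_1(F)$, any endomorphism that is an isomorphism on $H_1$ induces an automorphism $\bar\alpha_i$ of every free nilpotent quotient $F/\gamma_i(F)$. Setting $w:=\alpha(u)$ then yields $\bar\alpha_i(\bar u)=\bar w$ in $F/\gamma_i(F)$ for all $i$, so $u$ and $w$ are nilpotent--automorphism--equivalent. This half is routine once $\alpha$ is chosen, and such non--surjective $\alpha$ exist (the image necessarily has infinite index, by an Euler characteristic count, since a finite--index subgroup of $F_{2g}$ surjecting isomorphically onto $H_1$ must be everything).

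The hard part, and the main obstacle, is to verify that $G:=F/\langle\langle w\rangle\rangle\not\cong\Gamma_g$: by design $G$ and $\Gamma_g$ share all lower central quotients, hence the same $H_1$, the same $H_2\cong\mathbb{Z}$, and the same symplectic cup--product form, so any distinguishing invariant must be invisible to every nilpotent datum. I would argue that $G$, though an aspherical one--relator group (its relator is not a proper power) with $\cd G=2$ and $\chi=2-2g$, fails to be a Poincar\'e duality group of dimension $2$, and is therefore not a surface group by the classification of $\mathrm{PD}(2)$ groups as surface groups. Concretely, since $\alpha$ has infinite--index image, $w$ is a genus $g$ surface word written in a family $\{\alpha(a_i),\alpha(b_i)\}$ that is not a basis of $F$, and I expect this distortion to surface as $H^2(G;\mathbb{Z}G)\not\cong\mathbb{Z}$, i.e.\ as extra ends in a suitable subgroup. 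This is the same mechanism that forces the parasurface groups of Theorem~\ref{magnusparasurfacethm} to be genuinely non--surface, and I would import whatever non--nilpotent invariant that construction provides in order to separate $G$ from $\Gamma_g$.
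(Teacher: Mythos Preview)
Your overall architecture is exactly the paper's: take the surface relator $u=[a_1,a_2]\cdots[a_{2g-1},a_{2g}]$, push it through an endomorphism $\alpha$ of $F_{2g}$ that is the identity on $H_1$, and use Hopficity of free nilpotent groups to see that $\bar\alpha$ is an automorphism of each $F/\gamma_i(F)$, giving nilpotent--automorphism--equivalence of $u$ and $w=\alpha(u)$. Your reduction of non--automorphism--equivalence to $F/\langle\langle w\rangle\rangle\not\cong\Gamma_g$ is also correct and is precisely the reduction the paper uses.

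The gap is the step you yourself flag as the ``hard part'': you do not actually prove that $G=F/\langle\langle w\rangle\rangle$ fails to be a surface group. Saying you ``expect'' $H^2(G;\mathbb{Z}G)\not\cong\mathbb{Z}$ and would ``import whatever non--nilpotent invariant'' is available is not an argument; and the PD(2) route, while in principle decisive, gives no leverage here without a concrete computation tied to a concrete $\alpha$. You also leave $\alpha$ unspecified, and there is no general theorem guaranteeing that \emph{every} non--surjective $\alpha$ with $\alpha_*=\mathrm{id}$ on $H_1$ produces a non--surface $G$.

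The paper closes this gap by a specific, elementary choice and a subgroup argument rather than cohomology. It takes $\alpha(a_1)=a_1\delta$ with $\delta=[[a_1,a_2],a_1]$ and $\alpha(a_i)=a_i$ for $i>1$, so that $w=[a_1\delta,a_2][a_3,a_4]\cdots[a_{2g-1},a_{2g}]$. Inside $G$ one looks at $H=\langle a_1\delta,a_2,\ldots,a_{2g}\rangle$. First, $H$ has rank $2g$ and is not free: its generators satisfy the relation $w=1$, and a Hopfian argument shows that if $H$ were free of rank $2g$ these generators would be a free basis. Second, $H$ is proper: killing $a_3,\ldots,a_{2g}$ yields $\langle a_1,a_2\mid [a_1\delta,a_2]\rangle$; if $a_1\in H$ this quotient would be abelian, forcing $\langle\langle[a_1,a_2]\rangle\rangle=\langle\langle[a_1\delta,a_2]\rangle\rangle$ in $F_2$, which Magnus' conjugacy theorem for one--relator groups rules out because the two relators are cyclically reduced of different lengths. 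But a genus $g$ surface group has no proper, non--free subgroup of rank $2g$ (any proper subgroup of rank $\le 2g$ has infinite index, hence is free). Hence $G\not\cong\Gamma_g$, completing the proof. Replacing your PD(2) sketch with this concrete subgroup argument (for this specific $\alpha$) fills the gap.
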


\noindent
In the same flavor, Orin Chein in 1969 showed that there exist automorphisms of nilpotent quotients of $F_3$ that do not lift to automorphisms of $F_3$  \cite{chein-1969}.

\vskip.1in

Our third and final story concerns Magnus' conjugacy theorem for groups with one defining relator (Theorem 4.11, page 261, in \cite{MKS}), which we state below.
Let $\left<\left< w \right>\right>_H$ be the group generated by the $H$-conjugates of $w$ for a subgroup $H$ of $G$.

\begin{theoremNL}[Magnus' conjugacy theorem for groups with one defining relator]
\label{conjthm}
Let $s, t$ be elements in $F_k$ such that $\left< \left< s \right>\right>_{F_k} = \left< \left< t \right>\right>_{F_k}$.
Then $s$ is conjugate to $t^\epsilon$ for $\epsilon = 1$ or $-1$.
\end{theoremNL}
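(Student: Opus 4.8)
The plan is to pass from the shared normal closure $N := \langle\langle s\rangle\rangle_{F_k} = \langle\langle t\rangle\rangle_{F_k}$ to a structural description of the relator of the one-relator group $G := F_k/N$, and to recover $s$ and $t$ up to conjugacy and inversion from the relation module $N/[N,N]$ regarded as a module over the group ring $\mathbb{Z}[G]$. First I would make the standard reductions: conjugating $s$ or $t$ changes neither $N$ nor the desired conclusion, so I may assume both are cyclically reduced. Writing $s = u^p$ and $t = v^q$ with $u,v$ not proper powers, the one-relator group $G$ has the property that its torsion elements are precisely the conjugates of powers of the primitive root of its relator, and the maximal order of a torsion element equals the exponent $p$ (respectively $q$); see \cite{lyndon-schupp}. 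Since $G$ depends only on $N$, this forces $p=q$ and reduces everything to the \emph{primitive} case, where $s$ and $t$ are not proper powers. I will treat this case in detail.

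For $s,t$ primitive, the first and easiest piece of information comes from abelianizing: the image of $N$ in $H_1(F_k)\cong\mathbb{Z}^{k}$ is the cyclic subgroup generated by $\bar s$, and also the one generated by $\bar t$, so $\bar s = \pm\,\bar t$. This already isolates the sign $\epsilon$, but abelianization is far too lossy to yield conjugacy. The key tool is Lyndon's Identity Theorem \cite{lyndon-schupp}: when the relator is not a proper power, $N/[N,N]$ is a \emph{free} $\mathbb{Z}[G]$-module of rank one, freely generated by the class of the relator. Applying this to both $s$ and $t$, the classes $[s]$ and $[t]$ each freely generate the rank-one module $N/[N,N]$, hence $[t] = \alpha\,[s]$ for some unit $\alpha\in\mathbb{Z}[G]^{\times}$.

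The next step is to show $\alpha$ is a \emph{trivial} unit, i.e.\ $\alpha = \pm g$ for some $g\in G$. Here I would use that a torsion-free one-relator group is locally indicable (Brodskii--Howie) and therefore left-orderable (Burns--Hale); for a left-orderable group a leading-term argument on the support shows $\mathbb{Z}[G]$ has no zero divisors and only trivial units. Thus $[t] = \pm g\,[s] = \pm\,[gsg^{-1}]$, so after replacing $s$ by its conjugate $gsg^{-1}$ (which is permissible, as it changes neither $N$ nor the statement) we obtain $t \equiv s^{\epsilon} \pmod{[N,N]}$, with $\epsilon$ consistent with the abelianized computation above.

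The main obstacle is the final upgrade from congruence modulo $[N,N]$ to honest conjugacy in $F_k$. Equality of abelianized images in $N/[N,N]$ does not by itself force $t = s^{\epsilon}$, since $N$ is a nonabelian free group and many words share an abelianization. To close this gap I would exploit the symmetry of the hypothesis together with the explicit free structure of $N$: because $s$ is primitive, $N$ is free on a canonical $G$-indexed family of conjugates of $s$ (the lift of the free basis furnished by Lyndon's theorem), and by symmetry it is also free on the corresponding family of conjugates of $t$. Comparing these two free bases of the same free group $N$ — each a single $G$-orbit of a primitive element — via a Nielsen/Whitehead-type argument should force $t$ into the $G$-orbit of $s^{\pm1}$, giving the conjugacy $s \sim t^{\epsilon}$. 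I expect this orbit-matching step, rather than the module computation, to require the most care.
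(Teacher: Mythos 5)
The paper does not actually prove this statement; it is quoted verbatim from Magnus--Karrass--Solitar (Theorem 4.11, p.~261), where the proof is Magnus's induction on relator length via the Freiheitssatz and staggered presentations. Your proposal is a genuinely different, homological route, and several of its ingredients are individually sound: Lyndon's Identity Theorem does make $N/[N,N]$ a free $\mathbb{Z}[G]$-module of rank one on the class of a non-proper-power relator, so $[s]$ and $[t]$ differ by a unit, and that unit is trivial because torsion-free one-relator groups are locally indicable (here you should cite Higman's theorem that group rings of locally indicable groups have only trivial units, rather than a leading-term argument for left-orderable groups, where the unique-product issue is delicate). But the argument has a genuine gap exactly where you flag one, and it is not a gap of ``care'': the congruence $t\equiv (gsg^{-1})^{\epsilon}\bmod{[N,N]}$ is strictly weaker than the conclusion, and the proposed ``Nielsen/Whitehead-type'' comparison of the two $G$-orbit bases is not an argument. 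Two free bases of a free group that agree elementwise modulo the commutator subgroup need not be elementwise conjugate: in $F_3=\langle x_1,x_2,x_3\rangle$ the basis $\{x_1[x_2,x_3],x_2,x_3\}$ is congruent to $\{x_1,x_2,x_3\}$ modulo commutators, yet $x_1[x_2,x_3]$ is not conjugate to $x_1$ (different cyclically reduced lengths). Moreover, the bases you would be comparing are infinite (indexed by $G$), where Nielsen reduction and the Whitehead machinery do not apply as stated. The entire content of Magnus's theorem is concentrated in this last step; everything before it is a homological shadow of the hypothesis.

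This paper itself illustrates why that shadow is too weak. For $w=[a,b][c,d]$ and $t=w[w,bwb^{-1}]$ one has $t\in N=\langle\langle w\rangle\rangle_{F_4}$ and $t\equiv w\bmod{[N,N]}$ (since $[w,bwb^{-1}]\in[N,N]$), and by Claim~\ref{allquotientsClaim} the two elements even have equal normal closures in every nilpotent quotient; yet $t$ is not conjugate to $w^{\pm1}$ and in fact $\langle\langle t\rangle\rangle_{F_4}\neq N$. So any proof must use the full strength of the hypothesis $\langle\langle t\rangle\rangle_{F_4}=\langle\langle s\rangle\rangle_{F_4}$ beyond its image in the relation module, and your sketch never does. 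Two smaller gaps: the reduction to the primitive case only yields $p=q$ and that $\bar v$ is conjugate in $G$ to $\bar u^{\,j}$ with $\gcd(j,p)=1$; you still need $j=\pm1$ and some relation between $\langle\langle u\rangle\rangle_{F_k}$ and $\langle\langle v\rangle\rangle_{F_k}$ to invoke the primitive case. Also, the abelianization of $F_k$ ``isolates the sign'' only when $\bar s\neq 0$, which fails precisely in the cases (e.g.\ $s$ a product of commutators) most relevant to this paper.
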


\noindent
Our next result demonstrates that this theorem does \emph{not} generalize to one-relator nilpotent groups.
Let $F_{k,i} = F_k/\gamma_i(F_k)$ be the \emph{free rank $k$, $i$-step nilpotent quotient}.

\begin{theorem}\label{conjugacytheoremNoNo}
Let $F_4 = \left< a, b, c, d\right>$ denote the free group of rank $4$ and $w = [a,b][c,d]$.
Then 
$$\left< \left< w[w,bwb^{-1}] \right> \right>_{F_{4,i}}= \left<\left< w \right>\right>_{F_{4,i}}$$
for all $i$. However, for large $i$, the element $w$ is not conjugate to $w[w,bwb^{-1}]$ in $F_{4,i}$.
\end{theorem}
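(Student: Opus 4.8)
The plan is to prove the two assertions separately: first the equality of normal closures in every nilpotent quotient, then the eventual non-conjugacy. Throughout write $u = w[w,bwb^{-1}]$. For the normal-closure statement, one inclusion is immediate. Expanding the commutator,
\[
[w,bwb^{-1}] = w\,(bwb^{-1})\,w^{-1}\,(bw^{-1}b^{-1})
\]
exhibits $[w,bwb^{-1}]$ as a product of conjugates of $w^{\pm1}$, so $u \in \langle\langle w\rangle\rangle_{F_4}$, and hence $\langle\langle u\rangle\rangle_{F_{4,i}} \subseteq \langle\langle w\rangle\rangle_{F_{4,i}}$ for every $i$.

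The reverse inclusion is the first real step, and I would obtain it from a perfectness argument rather than a direct computation. Fix $i$ and set $Q = F_{4,i}/\langle\langle u\rangle\rangle_{F_{4,i}}$; as a quotient of $F_{4,i}$ it is nilpotent. In $Q$ the relation $u = 1$ reads $w = [bwb^{-1},w]$, so, writing $M = \langle\langle w\rangle\rangle_Q$ for the normal closure of the image of $w$, we have $w \in [M,M]$. Since $M$ is normal in $Q$, so is $[M,M]$; therefore every conjugate of $w$ lies in $[M,M]$, whence $M \subseteq [M,M]$ and $M = [M,M]$. A nilpotent group equal to its own commutator subgroup is trivial, so $M = 1$, i.e.\ $w \in \langle\langle u\rangle\rangle_{F_{4,i}}$. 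As this subgroup is normal it then contains $\langle\langle w\rangle\rangle_{F_{4,i}}$, giving equality.

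For the non-conjugacy statement, the key move is to avoid computing inside the nilpotent quotients and instead push the problem down to $F_4$ using that free groups are conjugacy--nilpotent--separable. Concretely, if $w$ and $u$ were conjugate in $F_{4,i}$ for arbitrarily large $i$, then by the nesting of these quotients they would be conjugate in $F_{4,i}$ for \emph{every} $i$, hence in every nilpotent quotient of $F_4$; conjugacy--nilpotent--separability would then force $w$ and $u$ to be conjugate already in $F_4$. So it suffices to check that $w$ and $u$ are not conjugate in $F_4$, which is a finite computation with cyclically reduced words: $w$ is cyclically reduced of length $8$, while the product $u = W\,W\,(bWb^{-1})\,W^{-1}\,(bW^{-1}b^{-1})$, with $W = aba^{-1}b^{-1}cdc^{-1}d^{-1}$, has no cancellation at any of its junctions, including the cyclic one, and so is cyclically reduced of length $44$. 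Since conjugate elements of a free group have equal cyclically reduced length, $w \not\sim u$ in $F_4$. Running the separability argument contrapositively then yields an $i_0$ with $w \not\sim u$ in $F_{4,i_0}$, and non-conjugacy persists for all $i \geq i_0$.

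The main obstacle is the reverse inclusion of the first part: a naive approach would try to realize $w$ modulo $\gamma_i$ as an explicit product of conjugates of $u$, and the depths work against this, since $u \equiv w$ only modulo $\gamma_5$, forcing the correction terms arbitrarily deep; the perfectness trick circumvents this entirely. I expect the genuine subtlety of the theorem to be conceptual rather than computational: the very separability that makes free groups well behaved is what is exploited to break Magnus' conjugacy theorem one level up, in the one-relator nilpotent quotients.
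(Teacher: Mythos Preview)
Your proof is correct and follows essentially the same route as the paper: the paper obtains the normal-closure equality via Claim~\ref{allquotientsClaim} (observing that $u \equiv w \bmod [H_2,H_2]$ with $H_2=\langle\langle w\rangle\rangle_{F_{4,i}}$, then invoking nilpotent generation), which is exactly your perfectness argument in the quotient $Q$ read upside-down; and for non-conjugacy the paper likewise reduces to $F_4$ via conjugacy--nilpotent--separability, with your explicit length-$44$ check just making the ``different cyclically reduced length'' step explicit.
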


\subsubsection*{Acknowledgements}

I am extremely grateful to my advisor, Benson Farb, for his teaching and inspiration.
I am also especially grateful to my coadvisor, Ben McReynolds, for all his time and help.
Further, I thank Justin Malestein for some useful discussions and careful reading of this paper.
Finally, I would like to thank Alex Wright, Tom Church, Blair Davey, and Thomas Zamojski  for reading early drafts of this paper.

\section{Almost surface groups} \label{pssection}

Let $\Gamma_g$ be the fundamental group of a closed hyperbolic surface of genus $g$.
A group $G$ is a \emph{weakly $g$-parasurface group} if $G/\gamma_k(G) \cong \Gamma_g / \gamma_k(\Gamma_g)$ for all $k \geq 1$.
If $G$ is weakly $g$-parasurface and residually nilpotent, we say that $G$ is $g$-\emph{parasurface}.
Let $G = F/N$ be a weakly $g$-parasurface group where $F$ is a free group of rank $2g$ on generators $a_1, a_2, \ldots, a_{2g-1}, a_{2g}$.
Set $w = [a_1, a_2] \cdots [a_{2g-1}, a_{2g}]$.
Recall that $\left<\left< w \right>\right>_F$ is the normal closure of $w$ in $F$.
Then we have the following trichotomy for such groups $G$:
\begin{itemize}
\item[] Type I. There exists an isomorphism $\phi: F \to F$ such that $\phi(N) \geq \left<\left< w \right>\right>_F$.
\item[] Type II. There exists an isomorphism $\phi: F \to F$ such that $\phi(N) <  \left<\left<w\right>\right>_F$.
\item[] Type III. $G$ is not of Type I or II.
\end{itemize}

The following theorem demonstrates that only examples of Type I or III may be residually nilpotent.

\begin{theorem} \label{typeTheorem}
Groups of Type I must be surface groups.
Further, groups of Type II are never parasurface.
\end{theorem}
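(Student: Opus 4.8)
The plan is to prove the two assertions separately, both hinging on understanding the relationship between the normal subgroup $N$ and the normal closure $\langle\langle w \rangle\rangle_F$.

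For the first assertion, that Type I groups must be surface groups, I would proceed as follows. By definition of Type I, after applying the automorphism $\phi$ we may assume $N \geq \langle\langle w \rangle\rangle_F$, so that $G = F/N$ is a quotient of $\Gamma_g = F/\langle\langle w \rangle\rangle_F$. The key point is that $G$ is weakly $g$-parasurface, so $G$ and $\Gamma_g$ have \emph{identical} lower central series quotients. First I would observe that the surjection $\Gamma_g \twoheadrightarrow G$ induces surjections $\Gamma_g/\gamma_k(\Gamma_g) \twoheadrightarrow G/\gamma_k(G)$ for every $k$. Since these are surjections between finitely generated nilpotent groups that are abstractly isomorphic, each such surjection is in fact an isomorphism (a surjective endomorphism of a finitely generated nilpotent group, or more precisely a surjection between two finitely generated nilpotent groups with isomorphic abelianizations at each stage, is forced to be injective — this is a Hopfian-type argument). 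I would then conclude that the kernel $\langle\langle w \rangle\rangle_F / N$ of $\Gamma_g \twoheadrightarrow G$ lies in $\gamma_k(\Gamma_g)$ for every $k$, hence in $\bigcap_k \gamma_k(\Gamma_g)$. Since surface groups are residually nilpotent, this intersection is trivial, forcing $N = \langle\langle w \rangle\rangle_F$ and therefore $G \cong \Gamma_g$.

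For the second assertion, that Type II groups are never parasurface, the setup gives (after applying $\phi$) a strict inclusion $N < \langle\langle w \rangle\rangle_F$, so now $\Gamma_g = F/\langle\langle w \rangle\rangle_F$ is a proper quotient of $G = F/N$; that is, there is a surjection $G \twoheadrightarrow \Gamma_g$ with nontrivial kernel $K = \langle\langle w \rangle\rangle_F / N$. The strategy is to derive a contradiction with residual nilpotence. Running the same Hopfian argument in reverse, the surjections $G/\gamma_k(G) \twoheadrightarrow \Gamma_g/\gamma_k(\Gamma_g)$ are again isomorphisms for each $k$ because both sides are finitely generated nilpotent with matching lower central quotients. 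Consequently the image of $K$ in each nilpotent quotient $G/\gamma_k(G)$ is trivial, which means $K \leq \gamma_k(G)$ for all $k$, i.e. $K \leq \bigcap_k \gamma_k(G)$. But if $G$ were parasurface it would be residually nilpotent, forcing $\bigcap_k \gamma_k(G) = 1$ and hence $K = 1$, contradicting the strictness $N < \langle\langle w \rangle\rangle_F$. Therefore no Type II group can be residually nilpotent, so none can be parasurface.

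The main obstacle in both parts is justifying rigorously that a surjection between the finite-step nilpotent quotients is automatically an isomorphism. The clean way to see this is that $G/\gamma_k(G)$ and $\Gamma_g/\gamma_k(\Gamma_g)$ are finitely generated nilpotent groups, and a finitely generated nilpotent group is Hopfian; thus a surjection from such a group onto an abstractly isomorphic group must be an isomorphism. I would make this precise by noting that a surjection $A \twoheadrightarrow B$ of isomorphic finitely generated nilpotent groups, composed with an isomorphism $B \cong A$, yields a surjective endomorphism of $A$, which is injective by the Hopf property, forcing the original surjection to be injective as well. The remaining verification — that the kernels land inside every term of the lower central series — then follows directly from naturality of the lower central series under the quotient maps, and residual nilpotence (of $\Gamma_g$ in Type I, of $G$ itself in Type II) closes each argument.
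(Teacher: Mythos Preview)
Your proof is correct and follows essentially the same route as the paper: in both parts you pass to the surjection between $G$ and $\Gamma_g$ furnished by the Type I/II hypothesis, use the Hopfian property of finitely generated nilpotent groups to upgrade the induced maps on lower central quotients to isomorphisms, and then invoke residual nilpotence (of $\Gamma_g$ in Type I, of $G$ in Type II) to force the kernel to vanish. One small slip: in the Type I paragraph the kernel of $\Gamma_g \twoheadrightarrow G$ is $N/\langle\langle w\rangle\rangle_F$, not $\langle\langle w\rangle\rangle_F/N$ as you wrote, but this does not affect the argument.
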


Our next two theorems show that although surface groups are residually nilpotent, there exists examples of Type II and III.
That is, weakly parasurface groups which are not parasurface groups exist.
And, further, parasurface groups which are not surface groups exist.

\begin{theorem} \label{weaklyparasurfaceTheoremExamples}
Let $k > 2$ be even, let $F_k = \left<a_1,a_2, \ldots, a_k\right>$, let $w = [a_1,a_2]\cdots [a_{k-1}, a_k]$, and let $\gamma$ be an element in $F_k$.
If $w[w,\gamma w\gamma^{-1}]$ is cyclically reduced and of different word length than $w$ in $F_k$, then the group
$$G = \left< a_1,a_2, \ldots, a_k : w[w,\gamma w \gamma^{-1}] \right>$$
is weakly $k/2$-parasurface of Type II.
\end{theorem}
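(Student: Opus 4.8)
The plan is to set $r = w[w,\gamma w\gamma^{-1}]$, $N = \langle\langle r\rangle\rangle_{F_k}$, and $M = \langle\langle w\rangle\rangle_{F_k}$, so that $G = F_k/N$ while the genus $g = k/2$ surface group is $\Gamma_g = F_k/M$. Using the standard identity $H/\gamma_i(H) \cong F_k/(K\,\gamma_i(F_k))$ for a quotient $H = F_k/K$, the weakly $g$-parasurface conclusion becomes equivalent to the equality of normal subgroups $N\,\gamma_i(F_k) = M\,\gamma_i(F_k)$ for every $i$. I would prove this by establishing the two inclusions separately, the first being elementary and the second carrying the real content.

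For the inclusion $N \subseteq M$, write $c = [w,\gamma w\gamma^{-1}] = w\,(\gamma w\gamma^{-1})\,w^{-1}\,(\gamma w^{-1}\gamma^{-1})$ and observe that each of the four factors is a conjugate of $w^{\pm 1}$; hence $c \in M$ and $r = wc \in M$, so $N \subseteq M$. This already gives $N\,\gamma_i(F_k) \subseteq M\,\gamma_i(F_k)$, and, taking $\phi$ to be the identity, shows $N \le \langle\langle w\rangle\rangle_{F_k}$, which is the weak half of the Type II condition.

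The heart of the argument is the reverse inclusion $M \subseteq N\,\gamma_i(F_k)$, for which it suffices (by normality) to show $w \in N\,\gamma_i(F_k)$, i.e.\ that the image of $w$ in $G/\gamma_i(G)$ vanishes for every $i$. I would work inside $G = F_k/N$, where the defining relation reads $w = c^{-1} = [w,\gamma w\gamma^{-1}]^{-1}$, and run a doubling induction: since $w$ is a product of commutators we begin with $w \in \gamma_2(G)$; and whenever $w \in \gamma_j(G)$, its conjugate $\gamma w\gamma^{-1}$ also lies in $\gamma_j(G)$, so $c = [w,\gamma w\gamma^{-1}] \in [\gamma_j(G),\gamma_j(G)] \subseteq \gamma_{2j}(G)$, whence $w = c^{-1} \in \gamma_{2j}(G)$. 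Iterating gives $w \in \gamma_{2^m}(G)$ for all $m$, so $w \in \gamma_i(G)$ for every $i$. This yields $w \in N\,\gamma_i(F_k)$ and therefore $M\,\gamma_i(F_k) = N\,\gamma_i(F_k)$, completing the proof that $G$ is weakly $g$-parasurface.

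It remains to upgrade $N \le M$ to the strict inclusion required for Type II. If instead $N = M$, then $\langle\langle r\rangle\rangle_{F_k} = \langle\langle w\rangle\rangle_{F_k}$, so Magnus' conjugacy theorem for one-relator groups forces $r$ to be conjugate to $w^{\epsilon}$ for some $\epsilon = \pm 1$. But conjugate cyclically reduced words in a free group have equal length, and $w$ is cyclically reduced with $|w^{\epsilon}| = |w|$, while by hypothesis $r$ is cyclically reduced of length different from $w$. This contradiction shows $N \subsetneq M$, so with $\phi = \mathrm{id}$ we get $\phi(N) < \langle\langle w\rangle\rangle_{F_k}$ and $G$ is of Type II. The main obstacle is the doubling induction of the third step: everything hinges on the defining relation expressing $w$ as a commutator of conjugates of $w$, which pushes $w$ one commutator-length deeper at each stage and drives its image into every $\gamma_i(G)$; the Type II claim is then a clean consequence of Magnus' conjugacy theorem combined with the length hypothesis.
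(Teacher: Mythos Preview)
Your proof is correct. The argument for Type~II via Magnus' conjugacy theorem and the length hypothesis is exactly what the paper does. The difference lies in how you establish the key claim that $w$ vanishes in every nilpotent quotient of $G$.

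The paper works in the free nilpotent quotient $F_{k,i}=F_k/\gamma_i(F_k)$: setting $H_1=\langle\langle r\rangle\rangle$ and $H_2=\langle\langle w\rangle\rangle$ there, it observes that $r\equiv w \bmod [H_2,H_2]$, so the image of $H_1$ generates $H_2/[H_2,H_2]$; since $H_2$ is nilpotent, the generation lemma (Lemma~\ref{generatorlemma}) then forces $H_1=H_2$. Your route instead stays inside $G$ and runs the doubling induction $w\in\gamma_j(G)\Rightarrow w=[w,\gamma w\gamma^{-1}]^{-1}\in[\gamma_j(G),\gamma_j(G)]\subseteq\gamma_{2j}(G)$, starting from $w\in\gamma_2(G)$. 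This is a genuinely different and more elementary argument: it avoids the external generation lemma entirely, relying only on the standard inclusion $[\gamma_i,\gamma_j]\subseteq\gamma_{i+j}$. The paper's approach has the mild advantage of yielding the equality $\langle\langle r\rangle\rangle=\langle\langle w\rangle\rangle$ in each $F_{k,i}$ directly (which it reuses for Theorem~\ref{conjugacytheoremNoNo}), whereas yours gives the slightly weaker but sufficient statement $w\in N\gamma_i(F_k)$; for the present theorem the two are equivalent once combined with $N\subseteq M$.
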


\noindent
In Theorem \ref{weaklyparasurfaceTheoremExamples}, one can take $\gamma = a_2$, for example.

\begin{theorem} \label{parasurfaceTheoremExamples}
Let $k > 2$ be even and let $F_k = \left<a_1, a_2, \ldots, a_k \right>$ and let $\delta$ be in the commutator subgroup of $F_k$.
If $[a_1 \delta,a_2]$ is cyclically reduced and is of different word length than $[a_1,a_2]$ in $F_2$, then the group
$$G = \left< a_1, a_2, \ldots, a_k : [a_1 \delta, a_2] [a_3, a_4]\cdots [a_{k-1}, a_k] \right>$$
is $k/2$-parasurface of Type III.
\end{theorem}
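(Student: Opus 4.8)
The plan is to verify the three defining properties in turn—matching lower central quotients, residual nilpotence, and membership in Type III—all organized around the single endomorphism $\psi\colon F_k\to F_k$ with $\psi(a_1)=a_1\delta$ and $\psi(a_j)=a_j$ for $j\ge 2$. Since $\delta$ lies in the commutator subgroup, $\psi$ induces the identity on $F_k^{\mathrm{ab}}$, hence a surjection—and therefore, by Hopficity of finitely generated nilpotent groups, an automorphism $\bar\psi_i$—on every quotient $F_k/\gamma_i(F_k)$. Because $\psi(w)$ is exactly the relator $r:=[a_1\delta,a_2][a_3,a_4]\cdots[a_{k-1},a_k]$, the automorphism $\bar\psi_i$ carries the image of $\langle\langle w\rangle\rangle$ onto the image of $\langle\langle r\rangle\rangle$ in $F_k/\gamma_i(F_k)$, so it descends to an isomorphism $\Gamma_g/\gamma_i(\Gamma_g)\xrightarrow{\ \sim\ }G/\gamma_i(G)$ for every $i$. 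This is what it means for $G$ to be weakly $k/2$-parasurface.

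For residual nilpotence I would invoke Labute's criterion for one-relator groups. Since $\delta\in\gamma_2(F_k)$ we have $[a_1\delta,a_2]\equiv[a_1,a_2]\pmod{\gamma_3(F_k)}$, so $r\in\gamma_2(F_k)\setminus\gamma_3(F_k)$ and its initial form in $\gamma_2/\gamma_3$ is the same symplectic element $\omega=\sum_j \bar a_{2j-1}\wedge\bar a_{2j}$ as that of the surface relator $w$. This is precisely the initial form for which Labute's theorem guarantees that $F_k/\langle\langle r\rangle\rangle$ is residually (torsion-free) nilpotent with associated graded Lie algebra equal to that of $\Gamma_g$; hence $G$ is residually nilpotent, and combined with the previous paragraph, $G$ is parasurface.

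It then remains to locate $G$ in the trichotomy. Being parasurface, $G$ cannot be of Type II by Theorem~\ref{typeTheorem}. By the same theorem a Type I group is a surface group, so it suffices to prove that $G$ is not a surface group; since $G^{\mathrm{ab}}\cong\mathbb Z^{2g}$, the only candidate is $\Gamma_g$ itself. Here the hypothesis on $[a_1\delta,a_2]$ enters decisively: being cyclically reduced of length different from $4$ guarantees that $a_1\delta$ is not a conjugate of $a_1$ by a power of $a_2$, equivalently that $\psi$ is \emph{not} an automorphism of $F_k$ (this is exactly what fails when $\delta=1$, the Type I case). My plan is to argue by contradiction: if $G\cong\Gamma_g$, then $\psi$ descends to a map $\bar\psi\colon\Gamma_g\to G$ which is an isomorphism on every nilpotent quotient and hence injective (as $\Gamma_g$ is residually nilpotent); co-Hopficity of closed surface groups would then force $\bar\psi$ to be onto. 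Using that $r$ and $w$ are not proper powers together with Magnus's conjugacy theorem~\ref{conjthm} and the rigidity of one-relator presentations, surjectivity of $\bar\psi$ would force $r$ to be conjugate to $\alpha(w)^{\pm1}$ for some $\alpha\in\Aut(F_k)$, i.e. would realize $\psi$ by a genuine automorphism on the conjugacy class of $w$—contradicting that $\psi$ is not an automorphism.

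The main obstacle is this last step: proving that the embedded surface subgroup $\bar\psi(\Gamma_g)$ is a \emph{proper} subgroup of $G$, equivalently that $r$ is not conjugate to any automorphic image of $w^{\pm1}$. The difficulty is structural: matching all nilpotent quotients renders every nilpotent invariant useless, so the obstruction must live in a non-nilpotent quotient. I expect to detect it in the metabelianization (the Alexander module) of $G$, where the Fox derivatives of $r$ record the extra factor $\delta$ and the length hypothesis should prevent the module map induced by $\psi$ from being surjective. Pinning down the exact consequence of the cyclic-reducedness and length hypotheses that forces this non-surjectivity is where the real work lies.
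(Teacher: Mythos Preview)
Your weakly-parasurface argument via $\psi$ is exactly the paper's.

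For residual nilpotence the paper does something different: it writes $G$ as an amalgam $F_{k-2}*_{\mathbb Z}F_2$ along $[a_1\delta,a_2]\cdots[a_{k-3},a_{k-2}]=[a_{k-1},a_k]^{-1}$ and applies Azarov's theorem, which says such an amalgam of free groups is residually a finite $p$-group once the amalgamated element is not a proper power in one factor. Your appeal to Labute is natural but needs care: Labute's theorem, as stated, computes the successive quotients $\gamma_i/\gamma_{i+1}$ and does not by itself give $\bigcap_i\gamma_i(G)=1$; you would have to supply that step separately.

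The substantive gap is in ruling out $G\cong\Gamma_g$. You correctly identify $H=\bar\psi(\Gamma_g)=\langle a_1\delta,a_2,\dots,a_k\rangle$ and see that the issue is whether $H\lneq G$; but your deduction ``surjectivity of $\bar\psi$ forces $r$ into the $\Aut(F_k)$-orbit of $w$, contradicting that $\psi$ is not an automorphism'' is a non sequitur (the lift $\alpha$ need not be $\psi$), and the Alexander-module attack is left entirely open. The paper closes this with a short argument that uses the length hypothesis directly. Pass to $G/N$ with $N=\langle\langle a_3,\dots,a_k\rangle\rangle$, obtaining $\langle a_1,a_2\mid[a_1\delta,a_2]\rangle$; if $a_1\in H$ this quotient is generated by the commuting pair $a_1\delta,\,a_2$, hence is abelian, so $\langle\langle[a_1\delta,a_2]\rangle\rangle=\langle\langle[a_1,a_2]\rangle\rangle$ in $F_2$, and Magnus's conjugacy theorem makes $[a_1\delta,a_2]$ conjugate to $[a_1,a_2]^{\pm1}$---contradicting the length hypothesis. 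Thus $a_1\notin H$. The paper then finishes not via co-Hopficity but by noting that $H$ is non-free of rank $k$ (were it free, Hopficity would make $a_1\delta,a_2,\dots,a_k$ a free basis, contradicting the relator), while every proper subgroup of $\Gamma_{k/2}$ of rank at most $k$ is free. Your co-Hopficity route would also conclude once $H\lneq G$ is in hand, since $\bar\psi$ embeds $\Gamma_g$ as $H$; the non-nilpotent quotient you were searching for is simply the two-generator one-relator group $\langle a_1,a_2\mid[a_1\delta,a_2]\rangle$, not the metabelianization.
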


\noindent
In Theorem \ref{parasurfaceTheoremExamples} one can take $\delta = [[a_1,a_2],a_1]$, for example.

\section{Proofs of the main results} \label{proofSection}

\subsection{Preliminaries}

We first list a couple of results needed in the proofs of our main theorems.
For the following lemma see Lemma 5.9, page 350 in Magnus, Korass and Solitar \cite{MKS}.
\begin{lemma} \label{generatorlemma}
Let $G$ be a free nilpotent group of class $c$ and let $g_1, g_2,  \ldots \in G$ be elements whose projections to $G/[G, G]$ generate.
Then $g_1, g_2,  \ldots$ generate $G$. \qed
\end{lemma}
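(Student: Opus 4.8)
The plan is to drop the freeness hypothesis entirely and prove the statement for an arbitrary nilpotent group $G$ of class $c$, since freeness plays no role: the only input is nilpotency. Write $H = \langle g_1, g_2, \ldots \rangle$ for the subgroup generated by the $g_i$. The hypothesis that the images of the $g_i$ generate $G/[G,G] = G/\gamma_2(G)$ says precisely that $H\gamma_2(G) = G$, and the goal is to upgrade this to $H = G$. I would carry this out by induction on the nilpotency class $c$.

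For the base case $c = 1$ the group is abelian, so $\gamma_2(G) = 1$ and the hypothesis $H\gamma_2(G) = G$ is literally $H = G$. For the inductive step, assume the claim for all nilpotent groups of class $< c$ and let $G$ have class $c \geq 2$. First I would pass to $\bar G = G/\gamma_c(G)$, which is nilpotent of class at most $c-1$. Since $\gamma_c(G) \subseteq \gamma_2(G)$, the natural surjection $G \to G/\gamma_2(G)$ factors through $\bar G$, so the images of the $g_i$ still generate $\bar G /[\bar G, \bar G]$. By the inductive hypothesis the images of the $g_i$ generate all of $\bar G$; translating back, this gives $H\gamma_c(G) = G$.

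It then remains to show $\gamma_c(G) \subseteq H$, for then $G = H\gamma_c(G) = H$ and we are done. Here I would exploit that $\gamma_c(G)$ is central, since $[G, \gamma_c(G)] = \gamma_{c+1}(G) = 1$. The group $\gamma_c(G) = [G, \gamma_{c-1}(G)]$ is generated by commutators $[v, u]$ with $v \in G$ and $u \in \gamma_{c-1}(G)$. Using $G = H\gamma_c(G)$, I can write $v = h z$ and $u = h' z'$ with $h, h' \in H$ and $z, z' \in \gamma_c(G) \subseteq Z(G)$. Because $z, z'$ are central, the standard identities $[a, bc] = [a,c]\,[a,b]^c$ and $[ab, c] = [a,c]^b\,[b,c]$ cause all correction terms involving $z, z'$ to drop out, collapsing $[v, u]$ to $[h, h']$, which lies in $H$. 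Hence every generator of $\gamma_c(G)$ lies in $H$, so $\gamma_c(G) \subseteq H$ and the induction closes.

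The argument is genuinely short, and the only place that demands care is this last paragraph: one must check that \emph{both} entries of each commutator generating $\gamma_c(G)$ can be pushed into $H$ modulo the central subgroup $\gamma_c(G)$, and that the resulting correction terms really do vanish by centrality. I expect this commutator bookkeeping — rather than any deep structural fact — to be the main (and essentially only) obstacle; everything else is a clean induction on class. Note in particular that the number of the $g_i$ never enters, so the statement holds verbatim even for infinitely generated free nilpotent groups.
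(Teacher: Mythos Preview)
Your argument is correct. The induction on nilpotency class is clean, and the key step---showing $\gamma_c(G)\subseteq H$ once $H\gamma_c(G)=G$---is handled properly: since $\gamma_c(G)$ is central, writing $v=hz$ and $u=h'z'$ with $z,z'\in\gamma_c(G)\subseteq Z(G)$ really does collapse $[v,u]$ to $[h,h']\in H$, and these commutators generate $\gamma_c(G)$. (One could phrase this last step even more succinctly: centrality of $\gamma_c(G)$ forces $H\trianglelefteq G$, whence $[G,G]=[H\gamma_c(G),H\gamma_c(G)]=[H,H]\subseteq H$, so already $\gamma_2(G)\subseteq H$.) Your observation that freeness is irrelevant is also right; the lemma holds for arbitrary nilpotent groups.

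As for comparison with the paper: there is nothing to compare. The paper does not prove this lemma at all---it simply cites Lemma~5.9 of Magnus--Karrass--Solitar and marks the statement with a \qed. Your write-up therefore supplies what the paper omits, and does so via the standard induction-on-class argument one finds in most treatments of nilpotent groups.
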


For the following theorem, see Azarov \cite{azarov}, Theorem 1.
\begin{theorem}[Azarov's Theorem] \label{azarovTheorem}
Let $A$ and $B$ be free groups, and let $\alpha$ and $\beta$ be nonidentity elements of the groups $A$ and $B$, respectively.
Let $G = (A*B ; \alpha = \beta)$.
Let $n$ be the largest positive integer such that $y^n = \beta$ has a solution in $B$.
If $n = 1$, then $G$ is a residually finite $p$-group.
\end{theorem}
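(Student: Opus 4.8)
The plan is to show that $G = (A \ast B;\ \alpha = \beta)$, the amalgam of the free groups $A$ and $B$ along the infinite cyclic subgroup $C = \langle\alpha\rangle = \langle\beta\rangle$, is residually a finite $p$-group by producing, for every prime $p$ and every nontrivial $g \in G$, a finite $p$-group quotient of $G$ in which $g$ survives. The natural device is the lower central $p$-series $\lambda_1(H) = H$, $\lambda_{i+1}(H) = [\lambda_i(H), H]\,\lambda_i(H)^p$: a finitely generated group $H$ is residually a finite $p$-group exactly when $\bigcap_i \lambda_i(H) = 1$, and each $H/\lambda_i(H)$ is then a finite $p$-group. Free groups satisfy $\bigcap_i \lambda_i = 1$ by a classical theorem of Magnus, so both factors $A$ and $B$ are residually-$p$ for every $p$; the entire difficulty lies in controlling the amalgamation across $C$.

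First I would invoke the standard criterion for residual $p$-finiteness of an amalgamated product: if $A$ and $B$ carry descending filtrations $\{A_i\}$, $\{B_i\}$ by normal subgroups with finite $p$-group quotients and trivial intersection that are \emph{$C$-compatible}, meaning $A_i \cap C = B_i \cap C$ for all $i$ with $\bigcap_i(A_i\cap C) = 1$, then one can assemble from them a descending filtration $\{G_i\}$ of $G = A\ast_C B$ with each $G/G_i$ a finite $p$-group and $\bigcap_i G_i = 1$. Granting such filtrations, one takes a nontrivial $g \in G$ in amalgamated normal form and, using Britton's Lemma together with the compatibility $A_i \cap C = B_i \cap C$, shows that $g \notin G_i$ for $i$ large, so that $g$ survives in the finite $p$-group $G/G_i$.

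It then remains to produce $C$-compatible filtrations, and this is exactly where the hypothesis $n = 1$ enters. Because $A$ and $B$ are free and residually-$p$, each intersection $\langle\alpha\rangle \cap \lambda_i(A)$ and $\langle\beta\rangle \cap \lambda_i(B)$ is a subgroup of $p$-power index in the corresponding infinite cyclic group, so both factors induce the full $p$-adic topology on $C \cong \mathbb{Z}$; the issue is to realize the \emph{same} chain $\langle\beta\rangle \supseteq \langle\beta^{p}\rangle \supseteq \langle\beta^{p^2}\rangle \supseteq \cdots$ from both sides with matching indices. The assumption that $\beta$ is not a proper power means $\langle\beta\rangle$ is isolated (equal to its own isolator) in $B$, so $\beta$ is nontrivial in $B/\lambda_2(B) \cong (\mathbb{Z}/p)^{\rk B}$ for every prime $p$; consequently the lower central $p$-series cuts $\langle\beta\rangle$ down by exactly one factor of $p$ at each relevant stage, giving the unshifted standard filtration. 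One then chooses and re-indexes a filtration of $A$ realizing the identical chain on $\langle\alpha\rangle$, and interleaves the two to obtain a genuinely $C$-compatible system. Were $\beta$ a proper power divisible by $p$, this alignment would fail and the method would break down.

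The main obstacle is the second step: verifying that passage to the assembled quotient $G/G_i$ does not collapse the amalgamated normal form of a given nontrivial element. This is a delicate normal-form argument via Britton's Lemma in which the compatibility $A_i \cap C = B_i \cap C$ is used essentially — a syllable lying outside $C$ in a factor must be kept outside the image of $C$ in the corresponding quotient $A/A_i$ or $B/B_i$ — and it is precisely the isolation of $\langle\beta\rangle$ supplied by $n = 1$ that makes the requisite filtrations available. Once the persistence of normal forms is established, residual $p$-finiteness for every prime $p$ follows, which is the assertion of the theorem.
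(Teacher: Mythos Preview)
The paper does not prove this theorem; it is quoted from \cite{azarov} (Theorem~1 there) and invoked as a black box in the proof of Theorem~\ref{parasurfaceTheoremExamples}. There is no argument in the paper to compare your sketch against.

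Evaluating your outline on its own terms: the architecture---build $C$-compatible normal $p$-filtrations on the free factors and then run a Britton-style normal-form argument to show that nontrivial elements survive in the assembled finite $p$-quotients---is the standard one for such results. But your justification of the filtration step contains a concrete error. You claim that because $\beta$ is not a proper power, $\langle\beta\rangle$ is isolated in $B$, and \emph{therefore} $\beta$ is nontrivial in $B/\lambda_2(B)\cong(\mathbb{Z}/p)^{\rk B}$. The implication ``isolated $\Rightarrow$ nonzero in the mod-$p$ abelianisation'' is false: $\beta=[b_1,b_2]$ in $B=F_2$ is not a proper power, so $\langle\beta\rangle$ is isolated, yet $\beta\in[B,B]\subseteq\lambda_2(B)$ and hence dies already in $B/\lambda_2(B)$. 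This is not a contrived counterexample---in the paper's own application one has exactly $\beta=[a_{k-1},a_k]^{-1}$. Consequently your picture of the lower central $p$-series slicing $\langle\beta\rangle$ by a single factor of $p$ at each stage, and the alignment argument you build on it, does not stand. The correct statement is only that the induced filtration on $\langle\beta\rangle\cong\mathbb{Z}$ is \emph{cofinal} with $\{\langle\beta^{p^k}\rangle\}$, and this holds for any nontrivial $\beta$ in a residually-$p$ group, without using $n=1$ at all. The hypothesis $n=1$ must therefore be doing its real work elsewhere (in controlling how cosets of $C$ in $B$ behave under the quotient maps, so that the normal form of a reduced word is not destroyed), and your sketch does not correctly locate that point.
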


\noindent

\subsection{The proofs}

Before proving Theorems  \ref{magnusparasurfacethm}, \ref{AutFreeNilpotentSeparableTheorem}, and \ref{conjugacytheoremNoNo} from the introduction, we first prove Theorems \ref{typeTheorem}, \ref{weaklyparasurfaceTheoremExamples}, and \ref{parasurfaceTheoremExamples} from Section \ref{pssection}.
\paragraph{Proof of Theorem \ref{typeTheorem}.}
We first show that groups of Type I must be surface groups.
For the sake of a contradiction, suppose that $G$ is a weakly $g$-parasurface of Type I, and $G$ is not isomorphic to $\Gamma_g$.
Let $F$ and $w$ be as in the definition of Type I groups.
By assumption, there exists an isomorphism $\phi: F \to F$ such that $\phi(N) \geq \left<\left< w \right>\right>_F$.
The isomorphism $\phi^{-1}$ induces a homomorphism $\rho_i : \Gamma_g/\gamma_i(\Gamma_g) \to G/ \gamma_i(G)$ which is surjective for all $i$.
As finitely generated nilpotent groups are Hopfian (see Section III.A.19 in \cite{harpe-2000}, for instance), the maps $\rho_i$ must be isomorphisms for all $i$.
On the other hand, since $G$ is not isomorphic to $\Gamma_g$, we must have some $\gamma \in \phi(N) - \left< \left< w \right> \right>_F$.
Further, $F/\left< \left< w \right> \right>_F = \Gamma_g$ is residually nilpotent, so there exists some $i$ such that $\gamma \neq 1$ in $\Gamma_g/ \gamma_i(\Gamma_g)$.
Since $\gamma \in \ker \rho_i$, we have a contradiction.

We now show that groups of Type II are never residually nilpotent.
For the sake of a contradiction, suppose that $G$ is a residually nilpotent group of Type II which is not a surface group.
Let $F$ and $w$ be as in the definition of Type II groups.
By assumption, the map $\phi: F \to F$, induces a map $\psi: G \to \pi_1(\Gamma_g)$ that is onto with non-trivial kernel.
Let $\gamma \in \ker \psi$.
Since $G$ is residually nilpotent, there exists $i$ such that $g \notin \gamma_i(G)$.
Hence, the induced map $\rho_i: G/\gamma_i(G) \to \Gamma_g/\gamma_i(\Gamma_g)$ is onto but not bijective, which is impossible as finitely generated nilpotent groups are Hopfian. \qed

\paragraph{Proof of Theorem \ref{weaklyparasurfaceTheoremExamples}.} 
Let $G$, $w$, and $\gamma$ be as in the statement of Theorem \ref{weaklyparasurfaceTheoremExamples}.
We first show that $G$ is weakly parasurface:

\begin{claim} We have $w = 1$ in all quotients $G/\gamma_i(G)$. \label{allquotientsClaim}
\end{claim}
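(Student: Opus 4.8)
The plan is to exploit the single defining relation $w[w,\gamma w\gamma^{-1}] = 1$ of $G$ to exhibit $w$ as a commutator lying arbitrarily deep in the lower central series, and then to bootstrap by a doubling induction. First I would observe that $w = [a_1,a_2]\cdots[a_{k-1},a_k]$ is a product of commutators, so $w \in \gamma_2(G)$; this provides the base case. The defining relation rewrites as $w = [w,\gamma w\gamma^{-1}]^{-1}$, which displays $w$ as (the inverse of) a commutator of $w$ with its own conjugate $\gamma w\gamma^{-1}$.

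The key step is an induction driven by the standard lower central series inclusion $[\gamma_m(G),\gamma_n(G)] \subseteq \gamma_{m+n}(G)$. Suppose inductively that $w \in \gamma_j(G)$ for some $j \geq 2$. Because $\gamma_j(G)$ is normal in $G$, the conjugate $\gamma w\gamma^{-1}$ also lies in $\gamma_j(G)$, and therefore $[w,\gamma w\gamma^{-1}] \in [\gamma_j(G),\gamma_j(G)] \subseteq \gamma_{2j}(G)$. Combining this with the relation $w = [w,\gamma w\gamma^{-1}]^{-1}$ forces $w \in \gamma_{2j}(G)$, completing the inductive step.

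Iterating from the base case $w \in \gamma_2(G)$ yields $w \in \gamma_{2^m}(G)$ for every $m \geq 1$. Since the lower central series is descending, every $\gamma_i(G)$ contains $\gamma_{2^m}(G)$ once $2^m \geq i$, so $w \in \bigcap_i \gamma_i(G)$. Equivalently, $w = 1$ in $G/\gamma_i(G)$ for all $i$, which is exactly the assertion of the claim.

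I do not anticipate a serious obstacle: the whole argument is a short doubling induction, and the only points requiring care are the direction of the commutator identity coming from the relation and the normality of $\gamma_j(G)$, both of which are routine. It is worth noting that the substantive hypotheses of Theorem~\ref{weaklyparasurfaceTheoremExamples}, namely that $w[w,\gamma w\gamma^{-1}]$ be cyclically reduced of word length different from $w$, are not used in this claim; they are reserved for the later verification that $G$ is genuinely of Type II rather than a surface group.
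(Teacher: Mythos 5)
Your proof is correct, but it takes a genuinely different route from the paper's. The paper works in the free nilpotent quotient $F_{k,i}=F_k/\gamma_i(F_k)$ and compares the normal closures $H_1=\left<\left< w[w,\gamma w\gamma^{-1}]\right>\right>$ and $H_2=\left<\left< w\right>\right>$ there: since $H_1\leq H_2$ and the relator is congruent to $w$ modulo $[H_2,H_2]$, Lemma \ref{generatorlemma} (generation of a nilpotent group is detected on the abelianization) forces $H_1=H_2$, whence $w=1$ in $G/\gamma_i(G)$. You instead run a commutator-calculus induction inside $G$ itself: from $w\in\gamma_j(G)$ and the relation $w=[w,\gamma w\gamma^{-1}]^{-1}$ you deduce $w\in[\gamma_j(G),\gamma_j(G)]\subseteq\gamma_{2j}(G)$, and doubling from the base case $w\in\gamma_2(G)$ places $w$ in every term of the lower central series. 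Both arguments are sound; yours is more elementary and self-contained (indeed the cruder increment $[w,\gamma w\gamma^{-1}]\in[\gamma_j(G),G]=\gamma_{j+1}(G)$ would already suffice, without the doubling). What the paper's version buys is that it directly establishes the normal-closure identity $\left<\left< w[w,\gamma w\gamma^{-1}]\right>\right>_{F_{k,i}}=\left<\left< w\right>\right>_{F_{k,i}}$, which is exactly what is invoked later in the proof of Theorem \ref{conjugacytheoremNoNo}; your statement does recover that identity, but only after the additional one-line observation that $w=1$ in $G/\gamma_i(G)$ means $w$ lies in the image of $\left<\left< w[w,\gamma w\gamma^{-1}]\right>\right>$ in $F_{k,i}$, hence $\left<\left< w\right>\right>_{F_{k,i}}$ is contained in it, the reverse containment being clear. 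You are also right that the cyclic-reducedness and word-length hypotheses play no role in this claim.
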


\emph{Proof of Claim.}
Let $H_1 = \left<\left< w[w,\gamma w \gamma^{-1} ] \right> \right>$ and $H_2 = \left<\left< w\right> \right>$ in $F_k/\gamma_i(F_k)$.
Clearly $H_1 \leq H_2$.
Further, the image of $H_1$ in $H_2/[H_2, H_2]$ generates as $w[w, \gamma w \gamma^{-1} ] = w \mod [H_2, H_2]$.
Hence, as $H_2$ is  nilpotent, Lemma \ref{generatorlemma} implies that $H_1 = H_2$, and so the claim follows.

\vskip.1in
If $w \neq 1$ in $G$, then the claim also shows that $G$ is not residually nilpotent.
Suppose, for the sake of a contradiction, that $w = 1$ in $G$.
Then by Magnus' conjugacy theorem for groups with one defining relator, $w^\epsilon$ and $w[w,\gamma w \gamma^{-1}]$ would be conjugate for $\epsilon = 1$ or $-1$, but this is impossible as they are both cyclically reduced words of different word lengths (Theorem 1.3, page 36, in \cite{MKS}).
Hence, $G$ is not residually nilpotent, and hence $G$ is not parasurface.
Further, as $w = 1$ in all nilpotent quotients, $G$ is weakly parasurface. 
The proof of Theorem \ref{weaklyparasurfaceTheoremExamples} is complete. \qed

\paragraph{Proof of Theorem \ref{parasurfaceTheoremExamples}.}
Let $G$ and $\delta$ be as in the statement of Theorem \ref{parasurfaceTheoremExamples}.
That $G$ is residually nilpotent follows from Theorem \ref{azarovTheorem} applied to $A = F_{k-2} = \left<a_1, a_2, \ldots, a_{k-2} \right>$ and $B = F_{2} = \left<a_{k-1}, a_k \right>$ with $\alpha = [a_1 \delta, a_2][a_3, a_4] \cdots [a_{k-3}, a_{k-2}]$ and $\beta = ([a_{k-1}, a_k])^{-1}$ and the following claim:
\begin{claim}
If $\beta = y^n$ in $B$, where $y \in B$, then $n = 1$.
\end{claim}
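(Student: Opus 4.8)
The plan is to show that $\beta = \left([a_{k-1},a_k]\right)^{-1}$ is not a proper power in the free group $B = F_2 = \left< a_{k-1}, a_k\right>$. Since $\beta^{-1} = [a_{k-1},a_k]$ and the equation $\beta = y^n$ is equivalent to $\beta^{-1} = (y^{-1})^n$, it suffices to prove the cleaner statement that the commutator $c := [a_{k-1},a_k]$ satisfies: if $c = y^n$ with $n \geq 1$, then $n = 1$. In other words, a commutator generator of a rank--two free group is not a proper power.

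The key idea is to pass to nilpotent quotients of $B$, which is natural given the lower--central--series theme of this paper. First I would project to the abelianization $B/\gamma_2(B) \cong \mathbb{Z}^2$. Since $c$ is a commutator it maps to $0$, so the relation $c = y^n$ forces $n\bar{y} = 0$ in the torsion--free group $\mathbb{Z}^2$; hence $\bar{y} = 0$, that is, $y \in \gamma_2(B)$.

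Next I would project to the free class--two nilpotent quotient $H := B/\gamma_3(B)$, the integral Heisenberg group. Its commutator subgroup $\gamma_2(B)/\gamma_3(B) \cong \wedge^2 \mathbb{Z}^2 \cong \mathbb{Z}$ is infinite cyclic, generated by the image $\bar{c}$ of $c$. Because $y \in \gamma_2(B)$, its image $\bar{y}$ lies in this infinite cyclic subgroup, so $\bar{y} = \bar{c}^{\,m}$ for some $m \in \mathbb{Z}$. Projecting $c = y^n$ into $H$ then gives $\bar{c} = \bar{c}^{\,mn}$; as $\bar{c}$ has infinite order we get $mn = 1$, and since $n$ is a positive integer we conclude $n = 1$.

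The computations here are routine once the quotients are chosen, so the only real content is the structural observation that makes the argument collapse: for rank two, $\gamma_2/\gamma_3 \cong \wedge^2 \mathbb{Z}^2 \cong \mathbb{Z}$ is cyclic and is generated precisely by the class of $[a_{k-1},a_k]$. This is what reduces the problem to an identity $\bar{c} = \bar{c}^{\,mn}$ inside a single infinite cyclic group, the step I expect to be the crux. For higher rank one would instead argue that $c$ is not a proper power via primitivity of commutators, but in our rank--two setting the Heisenberg quotient suffices and keeps the argument self--contained.
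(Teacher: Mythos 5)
Your proof is correct, but it takes a genuinely different route from the paper. The paper's argument stays in the one-relator-group world: from $y^n = \beta$ and the torsion-freeness of $B/\left<\left< \beta \right>\right> \cong \mathbb{Z}^2$ it deduces $y \in \left<\left< \beta \right>\right>$, hence $\left<\left< y \right>\right> = \left<\left< \beta \right>\right>$; Magnus' conjugacy theorem for groups with one defining relator then forces $y$ to be conjugate to $\beta^{\pm 1}$, and a cyclically-reduced word-length count rules out $n > 1$. You instead work entirely in nilpotent quotients: the abelianization forces $\bar{y} = 0$, so $y \in \gamma_2(B)$, and in the Heisenberg quotient $B/\gamma_3(B)$ the subgroup $\gamma_2(B)/\gamma_3(B) \cong \wedge^2\mathbb{Z}^2 \cong \mathbb{Z}$ is generated by the class of $[a_{k-1},a_k]$, so the relation collapses to $\bar{c} = \bar{c}^{\,mn}$ and $mn = 1$. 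Both arguments are sound; yours has the advantage of being self-contained and elementary (no appeal to Magnus' conjugacy theorem or to normal forms in one-relator groups), and it harmonizes with the lower-central-series theme of the paper, while the paper's version reuses machinery it has already invoked several times elsewhere. Your caveat is worth keeping in mind: your argument leans on the rank-two coincidence that $\gamma_2/\gamma_3$ is cyclic and generated precisely by $\bar{c}$, whereas the paper's word-length argument generalizes more directly; but since $B$ here is exactly $\left< a_{k-1}, a_k \right>$ of rank two, this is not a gap.
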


\emph{Proof of Claim.} If $\beta = y^n$ for some $n > 1$ then since $B/\left<\left< \beta \right>\right>$ is torsion--free, $y \in \left< \left< \beta \right> \right>$.
Hence, $\left< \left< y \right> \right> = \left< \left< \beta \right> \right>$, and so by Magnus' conjugacy theorem for groups with one defining relator, we have that $y$ is conjugate to $\beta $ or $\beta ^{-1}$.
But then $y^n$ would have cyclically reduced form $\beta ^n$, giving that $y^n$ has larger cyclically reduced word length than $\beta$.
This difference in word length cannot happen by Theorem 1.3, page 36, in \cite{MKS}, proving the claim.
\vskip.1in
We now show that $G$ is not a surface group.
Let $H = \left< a_1\delta, a_2, \ldots, a_{k-1}, a_k \right> \leq G$.
The next two claims imply that $H$ is a non-free proper subgroup of $G$ of rank $k$.
However, if $G$ were a surface group, it would have to be a surface group, $\Gamma_{k/2}$, of genus $k/2$.
Any rank $k$ proper subgroup of $\Gamma_{k/2}$ must be free, so $H$ must be free, a contradiction.

\begin{claim}$H$ is not a free group and is of rank $k$.\end{claim}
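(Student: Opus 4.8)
The plan is to verify the two assertions separately, treating the rank first and then non-freeness, since the non-freeness argument will rely on knowing that $\operatorname{rank}(H)=k$. Write $b_1 = a_1\delta$, so that $H = \left< b_1, a_2, \ldots, a_k \right>$ is generated by $k$ elements, which immediately gives $\operatorname{rank}(H) \le k$. The key observation I would record at the outset is that, rewritten in terms of these generators, the defining relator of $G$ reads
\[
  [b_1, a_2][a_3, a_4]\cdots[a_{k-1}, a_k] = 1;
\]
that is, the generators of $H$ satisfy the genus $k/2$ surface relation inside $G$, hence inside $H$.

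For the rank lower bound I would abelianize. Since the relator of $G$ is a product of commutators, $G^{\mathrm{ab}} \cong \mathbb{Z}^k$ with basis the images $\bar a_1, \ldots, \bar a_k$. Because $\delta$ lies in the commutator subgroup of $F_k$, its image in $G^{\mathrm{ab}}$ is trivial, so $b_1 = a_1\delta$ maps to $\bar a_1$. Consequently the composite $H \hookrightarrow G \twoheadrightarrow G^{\mathrm{ab}} \cong \mathbb{Z}^k$ sends the generators of $H$ to the standard basis and is therefore surjective. Thus $H^{\mathrm{ab}}$ surjects onto $\mathbb{Z}^k$, forcing $\operatorname{rank}(H) \ge k$, and combined with the upper bound this yields $\operatorname{rank}(H) = k$.

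For non-freeness I would argue by contradiction using the Hopf property of free groups. Suppose $H$ were free; then $H \cong F_k$ since $\operatorname{rank}(H)=k$. A free group of rank $k$ is Hopfian, so any $k$-element generating set is a free basis: the surjection $F_k \twoheadrightarrow H$ sending a basis to $b_1, a_2, \ldots, a_k$ would be an isomorphism. But then no nonempty reduced word could vanish on $b_1, a_2, \ldots, a_k$, contradicting the surface relation displayed above. This completes both halves of the claim.

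The only delicate point—what one might think of as the main obstacle—is ensuring that the relation satisfied by the generators of $H$ is genuinely nontrivial as an abstract word, rather than something that could collapse for a clever choice of $\delta$. This is in fact automatic, since $[X_1,X_2][X_3,X_4]\cdots[X_{k-1},X_k]$ is a nonempty reduced word in the free group on $X_1,\ldots,X_k$, independent of $\delta$. I note that this argument uses neither the cyclic-reducedness nor the word-length hypotheses of Theorem \ref{parasurfaceTheoremExamples}; those hypotheses are needed only for residual nilpotence and for the complementary claim that $H$ is a \emph{proper} subgroup of $G$.
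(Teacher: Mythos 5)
Your proof is correct and follows essentially the same route as the paper: the rank is pinned down by mapping to the abelianization $G/[G,G]\cong\mathbb{Z}^k$ (where $\delta$ dies), and non-freeness follows from the Hopf property of free groups together with the surface relation satisfied by $a_1\delta, a_2, \ldots, a_k$. Your closing remark that the cyclic-reducedness and word-length hypotheses are not needed for this particular claim is accurate and consistent with how the paper uses them.
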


\emph{Proof of Claim.}
$H$ is rank $k$, because by Lemma \ref{generatorlemma}, $\{ a_1 \delta, a_2 , \ldots, a_k \}$ generate $G/[G,G]$ and $G/[G,G] = \mathbb{Z}^k$.
Further, if $H$ were free, it would be free of rank $k$.
Let $x_1, x_2, \ldots, x_k$ be a free basis for $H$.
The map $H \to H$ given by $x_1 \mapsto a_1 \delta$ and $x_k \mapsto a_k$ for $k > 1$
is an isomorphism because $H$ is Hopfian (being a free group).
Hence, H is freely generated by $\{ a_1 \delta, a_2, \ldots, a_k \}$, but this is impossible since $[a_1 \delta,a_2] \cdots [a_{k-1},a_k] = 1$.

\begin{claim}$H$ is a proper subgroup of $G$.\end{claim}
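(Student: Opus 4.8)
```latex
The plan is to show that $H = \left< a_1\delta, a_2, \ldots, a_k \right>$ is a \emph{proper} subgroup of $G$ by exhibiting a homomorphism out of $G$ that is nontrivial but vanishes on all the generators of $H$. The natural target is an abelian (or nilpotent) quotient, since $G/[G,G] \cong \mathbb{Z}^k$ and abelianizations are easy to compute from a one-relator presentation. First I would compute the image of each generator of $H$ in a suitable quotient and check that they all land in a common proper subgroup, while simultaneously producing an element of $G$ lying outside that subgroup.

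The key observation is that $\delta$ lies in the commutator subgroup of $F_k$ by hypothesis. Consequently, in the abelianization $G/[G,G] \cong \mathbb{Z}^k$ with basis the images $\bar a_1, \ldots, \bar a_k$, the generator $a_1\delta$ maps to $\bar a_1$ (since $\delta$ dies in the abelianization), so the images of $H$'s generators in $G/[G,G]$ are exactly $\bar a_1, \bar a_2, \ldots, \bar a_k$, which generate everything. Thus abelianization alone cannot detect properness, and I must work one step deeper in the lower central series. The approach is to pass to $G/\gamma_3(G)$, or more directly to compare $H$ with $G$ using the element $\delta$ itself: since $\delta \in [F_k, F_k]$, the product $a_1 = (a_1\delta)\delta^{-1}$ would lie in $H$ only if $\delta \in H$, and I would argue that $\delta \notin H$ by tracking its class in $\gamma_2(G)/\gamma_3(G)$.

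The cleanest route is the following. I would show $a_1 \notin H$. If $a_1 \in H$, then since $a_1\delta \in H$ we would get $\delta \in H$, and then $H$ would contain $a_1, a_2, \ldots, a_k$, forcing $H = G$; so it suffices to rule out $H = G$. Suppose $H = G$. Then $\{a_1\delta, a_2, \ldots, a_k\}$ generates $G$, and in particular $a_1$ is expressible as a word in these generators. Projecting to the nilpotent quotient $G/\gamma_3(G)$ and using that $\delta \in \gamma_2$, one computes that such a word represents $a_1\delta^m \cdot(\text{higher terms})$ for some integer $m$, and matching against $a_1$ forces a relation on the $\gamma_2/\gamma_3$ component that is incompatible with $\delta$ having nonzero class there. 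The main obstacle will be verifying that $\delta$ has nontrivial image in $\gamma_2(G)/\gamma_3(G)$: I must confirm that the relator $[a_1\delta,a_2][a_3,a_4]\cdots[a_{k-1},a_k]$, which equals $w$ modulo $\gamma_3$, does not kill the class of $\delta$ in the free nilpotent quotient. For the suggested choice $\delta = [[a_1,a_2],a_1]$ this class lives in $\gamma_3$, so I would instead need to detect $\delta$ one level deeper, in $\gamma_3/\gamma_4$, and the argument should be phrased in terms of the lowest lower-central-series filtration degree in which $\delta$ is nontrivial, comparing the $H$-generated subgroup to $G$ at precisely that level.
```
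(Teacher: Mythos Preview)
Your approach has a genuine gap: the properness of $H$ in $G$ \emph{cannot} be detected in any nilpotent quotient of $G$. The very observation you make about the abelianization extends verbatim to every $G/\gamma_i(G)$. Since $\delta \in [F_k,F_k]$, the images of $a_1\delta, a_2, \ldots, a_k$ in $G/[G,G]\cong\mathbb{Z}^k$ coincide with those of $a_1,\ldots,a_k$ and hence generate; by Lemma~\ref{generatorlemma} (a subset of a nilpotent group generates if and only if it generates the abelianization) the images of $a_1\delta, a_2,\ldots,a_k$ therefore generate $G/\gamma_i(G)$ for every $i$. Thus $H$ surjects onto every nilpotent quotient of $G$, and in particular the image of $a_1$ always lies in the image of $H$ there. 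No computation in $\gamma_n/\gamma_{n+1}$, at any depth, can produce the contradiction you are looking for. Put differently, the map $a_1\mapsto a_1\delta$, $a_j\mapsto a_j$ for $j>1$ is an automorphism of every free nilpotent quotient $F_{k,i}$ --- this is precisely what makes $G$ weakly parasurface --- so the substitution is invisible to all nilpotent invariants.

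The paper instead passes to a \emph{non-nilpotent} quotient. Let $N\trianglelefteq G$ be the normal closure of $a_3,\ldots,a_k$, so that $G/N = \langle a_1,a_2 : [a_1\delta,a_2]\rangle$. The image of $H$ in $G/N$ is the subgroup $\langle a_1\delta, a_2\rangle$, which is abelian because the relator forces $a_1\delta$ and $a_2$ to commute. If $a_1\in H$ then its image lies in this abelian subgroup, and since $a_1,a_2$ generate $G/N$ this forces $G/N$ itself to be abelian, i.e.\ $\langle\langle [a_1,a_2]\rangle\rangle_{F_2} = \langle\langle [a_1\delta,a_2]\rangle\rangle_{F_2}$. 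Magnus' conjugacy theorem for one-relator groups then gives that $[a_1,a_2]^{\pm 1}$ is conjugate to $[a_1\delta,a_2]$ in $F_2$, contradicting the hypothesis that these are cyclically reduced words of different lengths. The word-length assumption on $[a_1\delta,a_2]$ is used exactly here and has no bearing on the nilpotent quotients you were examining.
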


\emph{Proof of Claim.} Indeed, the element $a_1$ cannot be in $H$. For suppose $a_1 \in H$, and let $N$ be the normal subgroup generated by $a_k$ for $k > 2$.
Then $G/N = \left< a_1, a_2 : [a_1 \delta, a_2] \right>$. Since $a_1 \in H$, we have that $G/N$ is abelian, and so has presentation $G/N = \left< a_1, a_2: [a_1,a_2] \right>$.
But then the normal subgroup generated by $[a_1,a_2]$ and the normal subgroup generated by $[a_1\delta, a_2]$ are equal in $F_2 = \left<a_1, a_2 \right>$.
By Magnus' conjugacy theorem for groups with one defining relator, $[a_1,a_2]^\epsilon$ must be conjugate to $[a_1 \delta, a_2]$ for $\epsilon = 1$ or $-1$ in $F_2 = \left< a_1, a_2 \right>$, which is impossible as $[a_1,a_2]$ and $[a_1\delta,a_2]$ are cyclically reduced and have different word lengths (Theorem 1.3, page 36, in \cite{MKS}).
Hence $a_1 \notin H$, so the claim is proved.

We finish the proof of Theorem \ref{parasurfaceTheoremExamples} by showing that all of the lower central series quotients of $G$ match those of a surface group of genus $k/2$.
\begin{claim} \label{weakly2-parasurfaceclaim} $G$ is weakly $k/2$--parasurface.\end{claim}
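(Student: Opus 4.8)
\textbf{Proof proposal for Claim \ref{weakly2-parasurfaceclaim}.}
The plan is to exhibit, for every $i$, an explicit isomorphism $G/\gamma_i(G) \cong \Gamma_{k/2}/\gamma_i(\Gamma_{k/2})$ by working upstairs in $F_k$. Write $w = [a_1,a_2][a_3,a_4]\cdots[a_{k-1},a_k]$ for the surface relator and $w' = [a_1\delta, a_2][a_3,a_4]\cdots[a_{k-1},a_k]$ for the defining relator of $G$, and set $N = \left<\left< w \right>\right>_{F_k}$ and $N' = \left<\left< w' \right>\right>_{F_k}$. Since $\gamma_i(\Gamma_{k/2})$ is the image of $\gamma_i(F_k)$ under $F_k \to F_k/N$, we have the identifications $\Gamma_{k/2}/\gamma_i(\Gamma_{k/2}) \cong F_k/N\gamma_i(F_k)$ and, likewise, $G/\gamma_i(G) \cong F_k/N'\gamma_i(F_k)$. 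Passing to the free nilpotent quotient $\overline{F} = F_k/\gamma_i(F_k)$, the images of $N$ and $N'$ are exactly the normal closures of $\overline{w}$ and $\overline{w'}$; hence it suffices to produce, for each $i$, an automorphism of $\overline{F}$ carrying $\overline{w}$ to $\overline{w'}$, for such an automorphism carries the normal closure of $\overline{w}$ onto that of $\overline{w'}$ and therefore descends to an isomorphism $F_k/N\gamma_i(F_k) \cong F_k/N'\gamma_i(F_k)$.

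To build this automorphism, I would consider the endomorphism $\sigma$ of $F_k$ defined on generators by $a_1 \mapsto a_1\delta$ and $a_j \mapsto a_j$ for $j > 1$. A direct computation gives $\sigma(w) = [a_1\delta, a_2][a_3,a_4]\cdots[a_{k-1},a_k] = w'$, so $\sigma$ sends the surface relator to the relator of $G$. Because $\delta$ lies in the commutator subgroup of $F_k$, the elements $a_1\delta, a_2, \ldots, a_k$ project to a generating set (indeed the standard basis) of the abelianization $\overline{F}/[\overline{F},\overline{F}] \cong \mathbb{Z}^k$. Lemma \ref{generatorlemma} then shows that $a_1\delta, a_2, \ldots, a_k$ generate the free nilpotent group $\overline{F}$, so the induced endomorphism $\overline{\sigma}$ of $\overline{F}$ is surjective; as $\overline{F}$ is a finitely generated nilpotent group, it is Hopfian, and therefore $\overline{\sigma}$ is an automorphism of $\overline{F}$ with $\overline{\sigma}(\overline{w}) = \overline{w'}$.

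Combining the two paragraphs, $\overline{\sigma}$ descends to an isomorphism $\Gamma_{k/2}/\gamma_i(\Gamma_{k/2}) \cong G/\gamma_i(G)$ for every $i$, which is precisely the assertion that $G$ is weakly $k/2$-parasurface. The conceptual heart of the argument — and the point I expect to require the most care to state correctly — is that $\sigma$ is \emph{not} an automorphism of $F_k$ itself: if it were, $G$ would be of Type I and hence a surface group by Theorem \ref{typeTheorem}, contradicting the earlier claims that $H$ is a non-free proper subgroup of rank $k$. The construction succeeds only because $\sigma$, while merely a proper surjection failing to be injective on $F_k$, becomes an honest automorphism after truncating at each stage of the lower central series; this is exactly the phenomenon (automorphisms of nilpotent quotients that do not lift) underlying the Type III behavior and linking this claim to Theorem \ref{AutFreeNilpotentSeparableTheorem}.
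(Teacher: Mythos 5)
Your proposal is correct and follows essentially the same route as the paper: both define the endomorphism $a_1 \mapsto a_1\delta$, $a_j \mapsto a_j$ ($j>1$), use Lemma \ref{generatorlemma} plus Hopficity of finitely generated nilpotent groups to see it induces an automorphism of each $F_{k,i}$, and observe that this automorphism carries the relator $w$ to the relator of $G$ and hence identifies the corresponding quotients. Your version merely spells out the identifications $\Gamma_{k/2}/\gamma_i(\Gamma_{k/2}) \cong F_k/N\gamma_i(F_k)$ and the descent step more explicitly than the paper does.
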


\emph{Proof of Claim.} Let $\psi$ be the map defined by $a_1 \mapsto a_1 \delta$ and $a_k \mapsto a_k$ for $k > 1$.
This gives a well defined map $F_{k,i} \to F_{k,i}$
where $F_{k, i} := F_{k}/\gamma_i(F_{k})$.
This is an epimorphism by Lemma 1.
Since finitely generated nilpotent groups are Hopfian, $\psi : F_{k,i} \to F_{k,i}$ must be an isomorphism.
Therefore the induced map on $\Gamma_{k/2}/\gamma_i(\Gamma_{k/2}) \to G/\gamma_i(G)$ is an isomorphism, as claimed. The proof of Theorem \ref{parasurfaceTheoremExamples} is now complete. \qed

\smallskip \smallskip

We are now ready to quickly prove all of our theorems stated in the introduction.

\begin{proof}[Proof of Theorem \ref{magnusparasurfacethm}]
Theorem \ref{parasurfaceTheoremExamples} gives the desired parasurface groups.
\end{proof}

\begin{proof}[Proof of Theorem \ref{AutFreeNilpotentSeparableTheorem} ]
The proof of Claim \ref{weakly2-parasurfaceclaim} with $\delta = [[a_1,a_2],a_1]$ shows that $[a_1,a_2]\cdots [a_{k-1}, a_k]$ and $[a_1\delta, a_2]\cdots [a_{k-1}, a_k]$ are nilpotent--automorphism--equivalent.
However, $[a_1,a_2]\cdots [a_{k-1}, a_k]$ is not automorphism--equivalent to $[a_1\delta, a_2]\cdots [a_{k-1}, a_k]$ in $F_k$, by Theorem \ref{parasurfaceTheoremExamples}.
\end{proof}

\begin{proof}[Proof of Theorem \ref{conjugacytheoremNoNo}]
The element $w$ is not conjugate to $w[w,bwb^{-1}]$ in $F_4$.
Hence, as $F_4$ is conjugacy--nilpotent--separable, there exists some large $N > 0$ such that for all $i > N$ we have that $w$ is not conjugate to $w[w,bwb^{-1}]$ in $F_{4,i}$.
Moreover, the equality
$$\left< \left< w[w,bwb^{-1}] \right> \right>_{F_{4,i}}= \left<\left< w \right>\right>_{F_{4,i}},$$
for all $i$, is an immediate consequence of Claim \ref{allquotientsClaim}.
\end{proof}

\section{Final remarks}

In this article, we have shown that there exist groups which are almost surface groups in the sense that they share all their lower central series quotients with a surface group but are not themselves surface groups.
In light of our examples, we pose the following question.

\begin{question}
What properties to parasurface groups share with surface groups?
\end{question}

As a small step in answering this question, we present the following

\begin{theorem} \label{positivetheorem}
Any finite-index subgroup of a parasurface group is not free.
\end{theorem}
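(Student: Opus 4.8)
The plan is to show that a parasurface group $G$ of genus $g$ shares enough of a surface group's homological structure to obstruct finite-index free subgroups, even though $G$ itself need not be a surface group. The key invariant to exploit is the lower central series quotients, which $G$ shares with $\Gamma_g$ by hypothesis. The cleanest route is through the abelianization and a Euler-characteristic-type argument: since $G/\gamma_2(G) \cong \Gamma_g/\gamma_2(\Gamma_g) \cong \mathbb{Z}^{2g}$, the group $G$ has first Betti number $2g$, matching the surface group. A finite-index free subgroup $H \leq G$ of index $n$ would be free of some rank $r$, and I would try to pin down $r$ via a multiplicativity or rank-estimate argument and derive a contradiction with residual nilpotence or with the lower central series data inherited from $\Gamma_g$.

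\textbf{First I would} reduce to the abelianized/second-quotient level. A free group of rank $r \geq 2$ has $\gamma_2/\gamma_3$ of rank $\binom{r}{2}$, whereas the surface group $\Gamma_g$ (and hence $G$) has a smaller second lower central quotient because of the surface relation $w = \prod [a_{2i-1}, a_{2i}] = 1$, which kills one class in $\gamma_2/\gamma_3$. So I would compute the ranks of $\gamma_2(G)/\gamma_3(G)$ from the parasurface hypothesis (these equal the corresponding ranks for $\Gamma_g$, which are standard and strictly smaller than the free-group values for the same abelianization rank). If $H$ were free of finite index, then passing to $H$ one could compare $\dim_{\mathbb{Q}} (\gamma_2(H)/\gamma_3(H)) \otimes \mathbb{Q}$ against what finite-index subgroups of $G$ must satisfy, using that the lower central series of $G$ is controlled by that of $\Gamma_g$.

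\textbf{The hard part will be} controlling the lower central series of the finite-index subgroup $H$ in terms of that of $G$, since lower central series do not behave simply under passage to finite-index subgroups. The honest fix is to avoid tracking $H$'s lower central series directly and instead argue cohomologically: a finitely generated free group $H$ has cohomological dimension $1$ and Euler characteristic $1 - r \leq 0$, and cohomological dimension passes to finite-index overgroups in the torsion-free case (Serre's theorem), forcing $\cd(G) \leq 1$, i.e.\ $G$ would itself be free. But $G$ is weakly $g$-parasurface with $g \geq 1$, so $H_2(G;\mathbb{Q}) \neq 0$: indeed $G$ surjects onto $\Gamma_g$ in every nilpotent quotient with matching $H_1 = \mathbb{Q}^{2g}$, and one shows $H_2(G;\mathbb{Q})$ is forced to be nonzero by the relation structure (a nonzero class detected in $\gamma_2/\gamma_3$ via the standard five-term exact sequence relating $H_2(G)$, $H_2(G/\gamma_2 G)$, and $\gamma_2 G/\gamma_3 G$). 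This contradicts $\cd(G) \leq 1$.

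\textbf{So the endgame is:} if some finite-index $H \leq G$ were free, then $G$ is virtually free and torsion-free, hence $\cd(G) \leq 1$ by Stallings--Swan together with Serre's theorem on finite-index subgroups, so $H^2(G; M) = 0$ for all coefficient modules $M$; but the parasurface hypothesis forces $H^2(G; \mathbb{Q}) \neq 0$ through the five-term exact sequence comparing $G$ with its surface-group nilpotent quotients, a contradiction. I expect the subtlety to lie entirely in extracting the nonvanishing $H_2(G;\mathbb{Q}) \neq 0$ purely from the matching of \emph{all} lower central quotients with $\Gamma_g$, rather than from any presentation of $G$; the rest is a clean assembly of standard virtual-cohomological-dimension facts.
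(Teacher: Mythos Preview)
Your ``endgame'' is essentially the paper's proof: suppose a finite-index $H \leq G$ is free, invoke Serre's theorem to get $\cd G = \cd H = 1$, and then derive a contradiction. The paper derives the contradiction by applying Stallings--Swan to conclude $G$ is free, and then noting that Magnus' parafree theorem would force $\Gamma_g$ itself to be free. You instead show $H_2(G;\mathbb{Q}) \neq 0$ directly from the five-term exact sequence and the rank drop $\rk(\gamma_2 G/\gamma_3 G) = \binom{2g}{2} - 1 < \binom{2g}{2} = \rk H_2(\mathbb{Z}^{2g})$. Both endings are valid; yours has the small bonus that it contradicts $\cd G \leq 1$ without even needing Stallings--Swan. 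The earlier paragraphs about tracking the lower central series of $H$ are, as you yourself note, not the right route and can be dropped.

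There is one genuine gap. You assert ``$G$ is virtually free and torsion-free'' and then apply Serre's theorem, but you never establish torsion-freeness, and Serre's theorem fails without it: virtually free groups can have torsion (e.g.\ $\mathbb{Z}/2 * \mathbb{Z}/2$ is virtually infinite cyclic), so the existence of a free finite-index subgroup alone does not give $\cd G \leq 1$. The paper handles this explicitly: if $g \in G$ had finite order, then by residual nilpotence its image in some $G/\gamma_k(G) \cong \Gamma_g/\gamma_k(\Gamma_g)$ would be a nontrivial torsion element, and Labute's theorem says these quotients are torsion-free. You need to insert this step (or an equivalent one) before invoking Serre; without it the argument does not close.
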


\begin{proof}
Let $G$ be a parasurface group.
Note that $G$ is not free, for if it were, Magnus' theorem would imply that the fundamental group of some compact surface is free.
Further, $G$ is torsion--free, for otherwise by residual nilpotence, there would exist torsion elements in $\Gamma_g/\gamma_k(\Gamma_g)$ for some $g$ and $k$, which is impossible by Labute \cite{labute}.

Let $\cd(G)$ denote the cohomological dimension of $G$.
If $\Gamma \leq G$ is a free group of finite index, then by Theorem 3.1 on page 190 in \cite{brown} and the fact that $G$ is torsion--free, $\cd G = \cd \Gamma$.
Hence, $\cd G = 1$, but then by Stallings \cite{stallings} and Swan \cite{swan}, $G$ must itself be free, a contradiction.
\end{proof}


\noindent Department of Mathematics \\
University of Chicago \\
Chicago, IL 60637, USA \\
email: {\tt khalid@math.uchicago.edu}
\end{document}